\newtheorem{definition}{Definition}[section]
\newtheorem{thm}[definition]{Theorem}
\newtheorem{lemma}[definition]{Lemma}
\newtheorem{claim}[definition]{Claim}
\newtheorem{prop}[definition]{Proposition}
\newtheorem{conj}[definition]{Conjecture}
\newenvironment{customthm}[1]
{\innercustomthm}
{\endinnercustomthm}
\newcommand{\AAA}{\mathcal A}
\newcommand{\FF}{\mathcal F}
\newcommand{\RR}{\mathcal R}
\newcommand{\D}{\Delta}
\newcommand{\la}[1]{\operatorname{la}(#1)}
\newcommand{\ceil}[1]{\left\lceil #1 \right\rceil}
\newcommand{\floor}[1]{\left\lfloor #1 \right\rfloor}
\newcommand{\Halfceil}[1]{\left\lceil \frac{#1}{2}\right\rceil}
\newcommand{\halfceil}[1]{\left\lceil #1/2\right\rceil}
\begin{document}
\baselineskip 0.65cm

\title{\bf Linear arboricity of degenerate graphs}
\vspace{6mm}
\author{Guantao Chen\thanks{Partially supported by NSF grant DMS-1855716, gchen@gsu.edu}\,\,, Yanli Hao\thanks{Partially supported by the University Graduate Fellowship of Georgia State University}\,\,, Guoning Yu\footnotemark[2] \thanks{The corresponding author, gyu6@gsu.edu}\\
Department of Mathematics and Statistics, Georgia State University \\
Atlanta, GA 30303, USA}

\date{}
\maketitle

\begin{abstract}
	A {\em linear forest} is a union of vertex-disjoint paths, and the {\em linear arboricity} of a graph $G$, denoted by $\la{G}$, is the minimum number of linear forests needed to partition the edge set of $G$. Clearly, $\la{G} \ge \ceil{\D(G)/2}$ for a graph $G$ with maximum degree $\D(G)$. On the other hand, the  {\it Linear Arboricity Conjecture} due to Akiyama, Exoo, and Harary from 1981 asserts that $\la{G} \leq$ $\ceil{(\Delta(G)+1) / 2}$ for every graph $ G $. This conjecture has been verified for planar graphs and graphs whose maximum degree is at most $ 6 $, or is equal to $ 8 $ or $ 10 $. 
	\vskip .1in
	Given a positive integer $k$,  a graph $G$ is {\it $k$-degenerate} if it can be reduced to a trivial graph by successive removal of vertices with degree at most $ k$. We prove that for any  $k$-degenerate graph $G$, $\la{G} = \ceil{\D(G)/2}$ provided $\D(G) \ge 2k^2 -k$. 
 
	\vskip .2in 
	
	\par {\small {\it Keywords:} linear forest partition; linear arboricity; degenerate graphs}
\end{abstract}

\section{Introduction}

All graphs in this paper are simple, i.e., finite, undirected, and without loops or multiple edges. A {\em linear forest} is a union of vertex-disjoint paths. Let $G$ be a graph, and $\D(G)$ be the maximum degree of $G$.  The {\em linear arboricity} of  $G$, denoted by $\la{G}$, is the minimum number of linear forests needed to partition its edge set. Clearly, in order to cover all edges incident to a vertex of maximum degree, we need at least $\ceil{\D(G)/2}$ linear forests, i.e., $\la{G} \ge \ceil{\D(G)/2}$. Moreover, it is not difficult to see that $\la{G} \ge \ceil{(\D(G) +1)/2}$ for regular graphs $G$ with even degree. The following conjecture, commonly referred to as the {\em linear arboricity conjecture} (LAC), of Akiyama, Exoo and Harary~\cite{Akiyama1981} in 1981 asserted this bound to be sharp.

\begin{conj}[LAC]
	For every graph $ G $, $ \la{G}\leq\Halfceil{\D(G)+1}$. 
\end{conj}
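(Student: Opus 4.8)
The plan is to first reduce the conjecture to the case of $d$-regular graphs, and then to attack that case through a $2$-factorization combined with a probabilistic redistribution of edges. Since $\la{\cdot}$ is monotone under taking subgraphs, it suffices to show that every graph $G$ embeds as a subgraph of some $\D(G)$-regular graph $H$ without increasing the target bound; this is achieved by the standard device of taking two copies of $G$ and repeatedly joining corresponding vertices whose degree is still below $\D(G)$, which terminates in a $\D(G)$-regular supergraph. Writing $d=\D(G)$, the bound $\Halfceil{d+1}$ equals $\floor{d/2}+1$ in both parities, so the whole problem reduces to partitioning the edges of a $d$-regular graph into $\floor{d/2}+1$ linear forests.

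For even $d=2k$ I would invoke Petersen's theorem to decompose $G$ into $k$ edge-disjoint $2$-factors $F_1,\dots,F_k$, each a disjoint union of cycles. Splitting every cycle at a single edge turns each $F_i$ into two linear forests, which only yields the trivial bound $2k$; the real work is to merge these $2k$ pieces down to $k+1$. My approach would be to fix an orientation of $G$ in which every vertex has in-degree and out-degree exactly $k$ (an Eulerian orientation per component), use it to coherently choose which edge to delete from each cycle, and then distribute the resulting path-fragments among $k+1$ color classes so that no class ever closes a cycle or creates a vertex of degree $3$. The odd case $d=2k+1$ I would reduce to the even one by peeling off a single linear forest that absorbs the extra unit of degree at each vertex, leaving a subgraph of maximum degree $2k$ to which the even-degree argument applies.

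The natural mechanism for the redistribution step is the Lovász Local Lemma. Partitioning the arcs of the oriented graph into short blocks and assigning each block a color uniformly at random, one declares a bad event for every potential monochromatic path that is long enough to risk linking into a cycle, and for every vertex that receives three edges of one color. Each bad event depends on only boundedly many others, so once the number of colors exceeds $d/2$ by a sufficient margin the Local Lemma supplies a valid assignment; this is exactly the line that produces the known asymptotic estimates of the form $\la{G}\le d/2+o(d)$.

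The main obstacle, and the reason this statement remains a conjecture, is closing the gap between that $o(d)$ error term and the exact surplus of a single forest. The Local Lemma is inherently lossy: it cannot be driven down to an additive constant, since the dependencies among long-path events grow with the cycle lengths, and the degree-one endpoints created whenever a cycle is cut can only be reabsorbed by opening extra color classes. I would therefore expect the decisive difficulty to lie in a \emph{structural}, rather than probabilistic, control of these endpoints — an exact argument, plausibly via discharging or via a careful analysis of how the $2$-factors $F_i$ intersect, that reclaims the last $o(d)$ forests without violating the path condition. It is precisely this exact-constant step that has resisted all attacks beyond the small and special values of $\D(G)$ listed in the introduction, and it is where I would concentrate the effort.
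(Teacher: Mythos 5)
The statement you were asked about is the Linear Arboricity Conjecture itself, which the paper states as an open conjecture and does not prove; its actual theorem is the much weaker (but exact) result for $k$-degenerate graphs with $\D(G)\ge 2k^2-k$, proved by an entirely different, non-probabilistic method (degenerate vertex orderings, a Hall-type system of distinct representatives, and an edge-by-edge extension of linear forest partitions). Your proposal is therefore not a proof, and indeed you concede as much in your final paragraph. The concrete gap is the merging step: after the reduction to $d$-regular graphs and the Petersen $2$-factorization (both of which are standard and fine), everything hinges on redistributing the $2k$ cycle-fragments into $k+1$ classes ``so that no class ever closes a cycle or creates a vertex of degree $3$.'' No mechanism is given for this, and the one you gesture at --- the Lov\'asz Local Lemma applied to blocks of an Eulerian orientation --- is known to deliver only $\la{G}\le d/2 + o(d)$ (Alon; later Ferber--Fox--Jain with error $O(d^{2/3-\alpha})$), because the dependency structure among the bad events forces a superconstant surplus of colors. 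Replacing that $o(d)$ surplus by a single extra forest is precisely the content of the conjecture, so your argument assumes what is to be proved at its decisive step.

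A secondary flaw: your reduction of odd degree $d=2k+1$ to the even case requires peeling off a linear forest in which \emph{every} vertex has degree at least $1$ (otherwise some vertex retains degree $2k+1$ and the even-degree machinery does not apply). Such a spanning linear forest without isolated vertices need not exist in an arbitrary graph (e.g.\ $K_{1,3}$), and even for regular graphs its existence is a nontrivial claim you neither prove nor cite; moreover, after peeling, the remainder has maximum degree $2k$ but is no longer regular, so Petersen's theorem does not apply directly and you must re-embed --- a loop your sketch does not address. If you want to see how exact (rather than asymptotic) control of the endpoints can actually be achieved in a special case, the paper's proof of Theorem~\ref{thm:main} is instructive: it exploits degree gaps in degenerate graphs by reserving representative neighbor sets $R^*(v_i)$ via Lemma~\ref{Rstar}, maintains the invariant $\FF^i(v,2)=\emptyset$ on reserved vertices, and kills monochromatic cycles by local color swaps (Claim~\ref{case12}) --- structural bookkeeping of exactly the kind you correctly predict is needed, but which no one yet knows how to carry out for general graphs.
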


The LAC implies that  $\ceil{\D(G)/2} \le \la{G} \le \ceil{(\D(G)+1)/2}$ for every graph $G$. An edge coloring of a graph is a partition of its edge set into matchings, which can be considered as a linear forest partition whose each component is a single edge. Hence, the LAC can be viewed as an analog to Vizing's theorem~\cite{Vizing64} that $\D(G) \le \chi'(G) \le \D(G) +1$ for every simple graph $G$, where $\chi'(G)$ is the {\it chromatic index}  of $G$. The conjecture is still wide open,  although there has been some progress in the past nearly 30 years. It was verified for graphs with maximum degree at most $ 6 $, and equal to $ 8 $ or $ 10 $: $\D(G) = 3, 4$ by Akiyama, Exoo, and Harary~\cite{Akiyama1980, Akiyama1981}, $\D(G) = 5, 6, 8$ by Enomoto and P\'eroche~\cite{EP-84}; and $\D(G) =10$ by Guldan~\cite{G-10-86}. the LAC was confirmed for planar graphs by Wu in 1999~\cite{Wu1999planar} and Wu and Wu in 2008~\cite{Wu2008planarD7}. Furthermore, Cygan, Hou, Kowalik, Lu\v{z}ar and Wu in 2011~\cite{conjforplanar} conjectured that the linear arboricity is completely determined for planar graphs provided the maximum degree is large and verified their conjecture for planar graphs with $\D(G)\ge 9$, leaving open only the case $\D(G) =6, 8$. 

\begin{conj}[Cygan et al.] \label{planar-conj}
	For any planar graph $ G $ with $ \D(G)\geq 5 $, $ \la{G}= \Halfceil{\D(G)} $.
\end{conj}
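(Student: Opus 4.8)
The statement to be proved is Conjecture~\ref{planar-conj}, which asserts $\la{G}=\Halfceil{\D(G)}$ for every planar graph $G$ with $\D(G)\ge 5$. Note first that this is a \emph{conjecture} in the excerpt, not a theorem, and the excerpt explicitly records that Cygan et al.\ verified it only for $\D(G)\ge 9$, leaving $\D(G)=6,8$ open (the cases $\D(G)=5,7$ being settled since the lower bound $\ceil{\D/2}$ already matches $\ceil{(\D+1)/2}$ when $\D$ is odd, once the LAC is known for planar graphs). So a complete proof is not available; what I can present is the plan by which one attacks the known portion and isolates exactly where the remaining obstruction sits.

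The plan is to prove the matching upper bound $\la{G}\le\Halfceil{\D(G)}$, since the lower bound $\la{G}\ge\Halfceil{\D(G)}$ is immediate from the degree count noted in the introduction. First I would reduce to the case that $\D(G)$ is even: when $\D(G)$ is odd, $\Halfceil{\D(G)}=\Halfceil{\D(G)+1}$, so the target bound coincides with the LAC bound, which is already confirmed for all planar graphs by Wu~\cite{Wu1999planar} and Wu--Wu~\cite{Wu2008planarD7}; hence nothing new is needed for odd $\D(G)$. The substance is therefore the even case, where one must save one forest relative to the generic LAC bound. Second, I would embed the problem in the standard orientation/discharging framework: fix an embedding of the planar graph, and seek an orientation in which every vertex has out-degree at most $\Halffloor{\D(G)}=\D(G)/2$, together with a proper coloring of the arcs so that each color class, read as an undirected subgraph, is a linear forest. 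The existence of such a low-out-degree orientation follows from planarity via Euler's formula, since planar graphs are sparse enough to admit orientations with bounded out-degree.

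The key steps, in order, are then the following. One, set up a minimal counterexample $G$ to the even case and derive structural restrictions on its reducible configurations using the planarity constraint $|E|\le 3|V|-6$ and its bipartite/girth refinements. Two, run a discharging argument: assign initial charge $d(v)-6$ to each vertex and $2d(f)-6$ to each face (so the total charge is negative by Euler), then design discharging rules moving charge from high-degree vertices to low-degree vertices and small faces. Three, show that every configuration surviving the discharging can be \emph{reduced}: remove or contract a small local configuration, apply minimality to obtain a linear-forest partition of the smaller graph into $\D(G)/2$ forests, and extend it across the deleted configuration by rerouting paths so that no color class acquires a vertex of degree three or a cycle. The extension lemmas are where the geometry of paths must be tracked carefully: when reinserting a vertex one has at most $\D(G)/2$ colors available at each incident forest-endpoint, and one must avoid both creating a monochromatic cycle and raising a monochromatic degree above two.

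The hard part, and the reason the full conjecture remains open in the excerpt, is precisely the low-maximum-degree even cases $\D(G)=6$ and $\D(G)=8$. For large even $\D(G)$ the discharging has ample slack: there are enough colors at each vertex that the rerouting in step three almost always succeeds, which is how the $\D(G)\ge 9$ range was handled. As $\D(G)$ shrinks the number of available colors at a reinsertion drops to the point where the forbidden configurations in the discharging cannot all be shown reducible, and the counting in step two no longer forces a contradiction; one runs out of room to both keep out-degree at $\D(G)/2$ and avoid monochromatic obstructions. I therefore expect the main obstacle to be establishing the reducibility of the dense local configurations for $\D(G)=6$ and $\D(G)=8$, where new reducible configurations or a sharper discharging scheme (possibly exploiting the planar structure more globally than a purely local count) would be required; absent that, the statement as worded can be proved only for $\D(G)\ge 9$ together with all odd $\D(G)\ge 5$.
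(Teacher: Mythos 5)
You have correctly identified the crucial point: the statement labeled Conjecture~\ref{planar-conj} is recorded in this paper as an \emph{open conjecture} of Cygan, Hou, Kowalik, Lu\v{z}ar and Wu, cited for context only --- the paper contains no proof of it, and its own contribution (Theorem~\ref{thm:main}) concerns degenerate graphs, not planar graphs. So there is no in-paper argument to compare yours against, and your refusal to manufacture a complete proof is the right call. Your reduction of the odd-$\D(G)$ cases is sound: when $\D(G)$ is odd, $\Halfceil{\D(G)}=\Halfceil{\D(G)+1}$, so the conjectured value follows from the LAC for planar graphs, which is indeed established by Wu~\cite{Wu1999planar} and Wu and Wu~\cite{Wu2008planarD7}; combined with the $\D(G)\ge 9$ result of~\cite{conjforplanar}, this leaves exactly $\D(G)=6,8$ open, matching the paper's own accounting.

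Two remarks on the speculative part. First, your outline (minimal counterexample, Euler-formula discharging with charges $d(v)-6$ and $2d(f)-6$, reducible configurations extended by recoloring without creating monochromatic cycles or degree-three vertices) is the standard machinery and is essentially the route taken in the literature for the $\D(G)\ge 9$ range, so as a plan it is credible; but as written it is a plan, not a proof --- no configuration is shown reducible and no discharging rule is verified, and you say as much. Second, a small caution: the claim that a low-out-degree orientation ``follows from planarity via Euler's formula'' gives out-degree at most $3$ (from $|E|\le 3|V|-6$), which only meets the target $\D(G)/2$ when $\D(G)\ge 6$; more importantly, bounded out-degree alone does not by itself yield linear forests from color classes, so that step would need the same careful path-tracking you flag in your extension lemmas. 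It is also worth noting that this paper's actual technique --- reserving representative sets $R^*(v_i)$ of right-neighbors via the Hall-type Lemma~\ref{Rstar} and extending partitions vertex by vertex while maintaining property~(\ref{condi31}) --- exploits degree gaps in degenerate graphs and does not apply to the planar conjecture as stated, since planar graphs are only $5$-degenerate and Theorem~\ref{thm:main} would require $\D(G)\ge 2\cdot 5^2-5=45$ there, far above the conjecture's threshold of $5$.
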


Approximately and asymptotically, Alon in 1988~\cite{alon1988linear} proved that $\la{G} \le \frac{\D(G)}2 + O\left(\frac{\D(G)\log\log\D(G)}{\log\D(G)}\right)$. In the same paper, he also showed that the LAC holds for graphs with girth $\Omega(\D(G))$. Alon and Spencer in 1992 (see~\cite{textprobmethod}) further improved this bound. In 2019, Ferber, Fox and Jain~\cite{Fox2019} further narrowed it to $ \D(G)/2 + \beta \D(G)^{2/3 -\alpha} $, where $\alpha, \beta$ are two positive constants. McDiarmid and Reed~\cite{MR-90} confirmed the LAC for random regular graphs with fixed degrees. Glock, K\"uhn and Osthus~\cite{GKO-16} showed that, for a large range of $p$, a.a.s.\ the random graph $G \sim G_{n,p}$ can be decomposed into $\ceil{\D(G)/2}$ linear forests. Moreover, they also verified the LAC for large and sufficiently dense regular graphs. 

For any positive integer $ k $, a graph $G$ is \textit{$ k $-degenerate} if it can be reduced to a trivial graph by successive removal of vertices of degree at most $ k$; equivalently, every subgraph of $ G $ has a vertex of degree at most $k$. Clearly, trees are $1$-degenerate; outerplanar graphs are $ 2 $-degenerate; and planar graphs are $5$-degenerate.  Moreover, for any surface $ \Pi $, oriented or non-oriented,  there is $ k = k(\Pi) $ such that all graphs embeddable in $ \Pi $ are $ k $-degenerate. Borodin, Kostochka and Woodall~\cite{BORODIN1997184} studied the list edge colorings and list total colorings for degenerate graphs. Zhou, Nakano and Nishizeki~\cite{ZHOU1996598} gave a linear time algorithm for computing the chromatic index of degenerate graphs. Isobe, Zhou and Nishizeki~\cite{Isobe07} showed that the Total Coloring Conjecture holds for $k$-degenerate graphs $G$ if $\D(G) \ge 4k+3$. Inspired by  their result, we show that  the LAC holds for $k$-degenerate graphs with large maximum degrees. More precisely, we completely determine $\la{G}$ for these graphs as follows. 

\begin{thm}\label{thm:main}
	Let $ G $ be a $k$-degenerate graph. If $ \D(G) \ge 2k^2-k $, then $ \la{G} = \Halfceil{\D(G)}$. 
\end{thm}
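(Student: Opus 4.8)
The plan is to establish only the upper bound $\la{G} \le \Halfceil{\D(G)}$, since the matching lower bound $\la{G} \ge \Halfceil{\D(G)}$ is immediate. Write $D = \D(G)$ and $m = \Halfceil{D}$; I would prove the slightly stronger statement that every $k$-degenerate graph $H$ with $\D(H) \le D$ admits a partition of $E(H)$ into $m$ linear forests $F_1, \dots, F_m$, by induction on $|V(H)|$. The base case is trivial, and for the inductive step I use degeneracy to pick a vertex $v$ with $\deg_H(v) = d \le k$. Setting $H' = H - v$, the graph $H'$ is again $k$-degenerate with $\D(H') \le D$, so by induction it has a partition into $m$ linear forests. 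Everything then reduces to an \emph{extension lemma}: given such a partition of $H'$, one can insert the $d \le k$ edges $vu_1, \dots, vu_d$ while keeping each color class a linear forest.

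For the extension I would first recast it as an assignment problem. Color the $d$ star edges with \emph{distinct} colors from $\{1, \dots, m\}$; distinctness guarantees that $v$ is a leaf in every color class, so no edge at $v$ can close a cycle and $v$ never exceeds degree $1$ in a color. The only remaining requirement is that the color $c$ chosen for $vu_t$ be \emph{available} at $u_t$, meaning $u_t$ carries at most one $F_c$-edge. Thus it suffices to find a system of distinct representatives for the availability sets $A_t = \{c : u_t \text{ has } \le 1 \text{ edge of color } c\}$, which by Hall's theorem exists once every union $\bigcup_{t \in S} A_t$ has size at least $|S|$; in particular $|A_t| \ge k$ for all $t$ would suffice. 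A direct count gives only $|A_t| \ge m - \floor{(\deg_H(u_t) - 1)/2}$, which can be as small as $1$ when $u_t$ has degree close to $D$, so the naive assignment can genuinely fail and further work is needed.

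The heart of the argument, and the step I expect to be the main obstacle, is therefore a \emph{recoloring} procedure that enlarges the availability sets until Hall's condition holds. For a saturated neighbor $u_t$ I would free up additional colors by rerouting edges incident to $u_t$ into other classes, in the spirit of Vizing fans and Kempe chains: to make a full color $c$ available at $u_t$, recolor one of its two $c$-edges $u_t x$ into a color that is available at both $u_t$ and $x$ and whose addition creates no cycle. The extra subtlety beyond ordinary edge coloring is precisely this cycle-freeness, since color classes are paths rather than matchings, so each recoloring must be checked against the two path-components it would merge, and chains must be routed to avoid closing a cycle. This is where the hypothesis $\D(G) \ge 2k^2 - k$ is consumed: there are at most $d \le k$ edges to insert, each demanding on the order of $k$ newly freed colors, and freeing one color costs a bounded-length chain that must dodge a bounded number of obstructed colors at every step; guaranteeing such chains always exist forces $m$ to be of order $k^2$, i.e.\ $D$ of order $2k^2$. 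Making this chain bookkeeping precise—quantifying how many colors each recoloring step can obstruct at the neighbors and verifying that enough colors always survive to continue—is the technical crux, after which Hall's theorem completes the extension and the induction closes.
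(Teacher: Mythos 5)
Your setup is sound as far as it goes: the reduction to the upper bound, the induction on a degeneracy vertex $v$ of degree $d\le k$, and the observation that assigning \emph{distinct} colors to the star edges at $v$ makes acyclicity at $v$ automatic are all correct, and your count showing that an availability set $A_t$ can shrink to size $1$ when $u_t$ has degree near $\D$ correctly diagnoses why the naive Hall assignment fails. But the proof stops exactly where it would have to begin. The recoloring procedure that is supposed to restore Hall's condition is only described by analogy with Vizing fans and Kempe chains; it is never constructed. For linear forests the analogous machinery is not known to work: in the union $F_c \cup F_{c'}$ of two color classes, vertices can have degree up to $4$, components need not be paths or even cycles of alternating colors, and swapping colors along a segment can create a monochromatic cycle or a degree-$3$ vertex far from the vertex being repaired --- this failure of Kempe-type arguments is essentially why the LAC remains open while Vizing's theorem does not. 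Your closing paragraph, asserting that freeing a color ``costs a bounded-length chain'' and that this ``forces $m$ to be of order $k^2$,'' is a guess at the shape of a proof rather than an argument: nothing is established about the existence, termination, or non-interference of these chains. Since the extension lemma is the heart of your induction, the proposal has a genuine gap, not a fixable technicality.

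It is instructive to compare with how the paper sidesteps the obstacle you ran into: it runs the induction in the opposite direction. Instead of deleting a low-degree vertex and reinserting its edges into an arbitrary partition of the rest (where the endpoints $u_t$ may be full-degree and nearly saturated), it fixes a $k$-degenerate ordering $v_1,\dots,v_n$ and inserts at step $i$ all edges from $v_i$ to its right-neighbors; at that moment every such neighbor has current degree at most $k$, so the only hard vertex is $v_i$ itself, which reaches full degree $\D$. To absorb $v_i$'s last edges, the paper reserves \emph{in advance}, via a Hall-type argument (Lemma~\ref{Rstar}), pairwise disjoint representative sets $R^*(v_i)$ of size $r=\floor{\frac{\D-k}{k}}\ge 2k-2$ for all high-degree vertices, and maintains throughout the global invariant that reserved vertices never have degree $2$ in any forest (property (\ref{condi31})); the bulk edges go in by the easy averaging Lemma~\ref{bruteforce}, the counting identity $r\le 2|C_0(v_i)|+|C_1(v_i)|\le r+1$ shows the leftover colors exactly fit the $r$ reserved edges, and monochromatic cycles are killed by three explicit single swaps (Claim~\ref{case12}) rather than by unbounded chains. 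Note also that the hypothesis $\D\ge 2k^2-k$ is consumed entirely in Lemma~\ref{Rstar} to guarantee $r\ge 2k-2$, not in any chain bookkeeping --- so your heuristic for where the quadratic bound enters does not match what is actually needed. Your approach would become a proof only upon establishing the recoloring lemma you deferred, an open-ended problem that the paper's reserved-representative invariant is specifically designed to avoid.
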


Basavaraju, Bishnu, Francis and Pattanayak~\cite{3degenerate2020} recently showed that the LAC holds for $3$-degenerate graphs. We also like to make some comments about our proof techniques. Previously, except for planar graphs, all known proofs are based on converting the graphs to regular graphs and use the property that all vertices have the same degree to find a desired linear forest partition. In our proof, we take advantage of the gaps of degrees. More precisely, we reserve some vertices with small degrees as {\it representatives} to give some specific partitions for vertices with large degrees, which in turn gives a linear forest partition step by step for the entire graph. We hope that this new approach may shed some light on the issues that arise when trying to solve for the LAC. We will present some auxiliary lemmas in Section \ref{presection} and prove Theorem~\ref{thm:main} in Section \ref{proofsection}.

\section{Preliminaries}\label{presection}

In this section, we state some basic notation and terminology, present a brute-force method of adding a pair of adjacent vertices with ``low degree'' to a linear forest partition (Lemma~\ref{bruteforce}), and introduce some properties of ``high degree'' vertices in degenerate graphs (Lemma~\ref{Rstar}).

We mainly use the notation from West~\cite{textGTwest}. Let $G$ be a graph with vertex set $V(G)$ and edge set $E(G)$. For a vertex $v\in V(G)$, the {\it neighborhood} of $v$, denoted by $N_G(v)$ or $N(v)$, is the set of vertices adjacent to $v$. The {\it degree} of $v$ in $G$, written $d_G(v)$ or $d(v)$, is the number of edges incident to $v$.  Since we are only considering simple graphs, it follows $d(v) = |N_G(v)|$.  The set of edges that connects $ v $ to a vertex set $ W\subseteq V(G) $ is denoted by $E(v, W)$, that is, $E(v, W)=\{vw\in E(G):\, w\in W\}$. Let $ G \pm v, G \pm e $ and $ G \pm F $ denote the graph obtained from $G$ by adding or deleting a vertex $ v $, an edge $ e $ and an edge set $ F $, respectively.

A partition $ \FF := F_1 \mid F_2 \mid \cdots \mid F_t $ of $E(G)$ is called a \textit{linear forest partition} of $ G $ if the spanning subgraph induced by $F_i$ is a linear forest for each $i\in \{1, \dots, t\}$. For the sake of convenience, we also use $F_i$ to denote the spanning subgraph induced by $F_i$. Notice that graph $F_i$ may contain isolated vertices. Moreover, for any vertex $ v\in V(G) $ and $ F_i\in\FF $, since $F_i$ is a linear forest, we have $ d_{F_i}(v) =0,1 $ or $ 2 $, which corresponds to each of the following three situations, respectively: $ v $ is an isolated vertex, an end-vertex of a path, or an internal vertex of a path in $ F_i $. For each $v\in V(G)$ and $ p \in \{0,1,2\} $, let $ \FF(v,p) = \{F_j\in \FF : \, d_{F_j}(v)=p \} $. By definition, we have $ 2|\FF(v,2)|+|\FF(v,1)|=d(v) $ and $ |\FF(v,2)|+|\FF(v,1)|+|\FF(v,0)| = t $. In our proof, we start with a linear forest partition of a subgraph of $G$ and add the rest edges to it step by step to get a desired linear forest partition of $G$.  At each step, we want the linear forest $\FF$ have a restricted number of the vertices $v$ with $ \FF(v, 2) \ne \emptyset $, where the following brute-force method of adding a pair of adjacent vertices with ``low degree'' plays a critical role. 

\begin{lemma}\label{bruteforce}
	Let $G$ be a graph, and $xy$ be an edge of $G$. If $ 2d(x)+d(y)\leq 2t+2 $, then for any linear forest partition $ \FF= F_1 \mid F_2 \mid \cdots \mid F_t $ of $ G-xy $, there exists an $ i\in $ $ \{1 $, $ \dots $, $ t\} $ such that $\FF^*=F_1\mid \dots \mid F_{i-1}\mid F_i + xy  \mid F_{i+1}\mid \dots \mid F_t$ is also a linear forest partition of $ G $. Moreover, $ \FF^*(v,2) = \FF(v,2) $ for any $ v\in V(G)\setminus\{y\}$.
\end{lemma}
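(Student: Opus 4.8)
The plan is to locate, among the $t$ forests of $\FF$, a single index $i$ at which $xy$ can be appended so that $F_i+xy$ is still a linear forest and, crucially, so that the ``moreover'' clause survives. The decisive observation is that the clause $\FF^*(v,2)=\FF(v,2)$ is demanded not only for $v\neq x,y$ (where it holds automatically, since appending $xy$ to $F_i$ alters no degree other than at $x$ and $y$) but also for $v=x$. Appending $xy$ raises $d_{F_i}(x)$ by one, so to keep $F_i$ out of $\FF^*(x,2)$ we are forced to choose $i$ with $d_{F_i}(x)=0$. I will therefore call an index $i$ \emph{good} if $d_{F_i}(x)=0$ and $d_{F_i}(y)\le 1$, and the whole task reduces to showing a good index exists.

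First I would verify that any good index works. If $d_{F_i}(x)=0$ then $x$ is isolated in $F_i$ and lies on no path of $F_i$, so appending $xy$ cannot close a cycle; since $d_{F_i}(y)\le 1$, the vertex $y$ is isolated or is a path endpoint, hence $F_i+xy$ is again a linear forest. Moreover $d_{F_i}(x)$ moves from $0$ to $1$, so $F_i$ enters neither $\FF(x,2)$ nor $\FF^*(x,2)$, while all other forests are untouched; thus $\FF^*(v,2)=\FF(v,2)$ for every $v\neq y$. This is precisely where the cycle condition, usually a separate nuisance in such arguments, is dispatched for free by the requirement $d_{F_i}(x)=0$.

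The remaining work is a short counting estimate. Writing $a=|\FF(x,2)|$ and $a'=|\FF(x,1)|$, the identity $2a+a'=d(x)-1$ gives $|\FF(x,0)|=t-(a+a')=t-(d(x)-1-a)\ge t-d(x)+1$. Among the indices with $d_{F_i}(x)=0$, the ones that fail to be good are exactly those with $d_{F_i}(y)=2$, of which there are $|\FF(y,2)|\le (d(y)-1)/2$ in total. Hence the number of good indices is at least $(t-d(x)+1)-(d(y)-1)/2 = t-\bigl(d(x)+d(y)/2\bigr)+3/2$. The hypothesis $2d(x)+d(y)\le 2t+2$ is exactly $d(x)+d(y)/2\le t+1$, so this lower bound is at least $1/2$; being an integer, it is at least $1$, and a good index exists.

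I do not expect a genuine obstacle: once ``good'' is correctly defined, the argument is a direct double count. The one subtle point requiring care is the interaction with the ``moreover'' clause, namely recognizing that it strengthens the demand on $x$ from $d_{F_i}(x)\le 1$ to $d_{F_i}(x)=0$. This strengthening is exactly what forces the asymmetric weighting $2d(x)+d(y)$ in the hypothesis rather than the symmetric $d(x)+d(y)$, and it is precisely this asymmetry that the final inequality consumes.
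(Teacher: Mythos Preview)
Your proof is correct and follows essentially the same approach as the paper: both arguments identify the target condition $d_{F_i}(x)=0$ and $d_{F_i}(y)\le 1$ and reach it by a short counting argument. The only cosmetic difference is that the paper bundles the two constraints into a single weighted average, observing that $\sum_{i=1}^t(2d_{F_i}(x)+d_{F_i}(y))=2d(x)+d(y)-3<2t$, whence some index satisfies $2d_{F_i}(x)+d_{F_i}(y)\le 1$; your version instead lower-bounds $|\FF(x,0)|$ and subtracts $|\FF(y,2)|$, arriving at the same conclusion.
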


\proof 
Since $\FF =F_1\mid \dots \mid F_t$ partitions $E(G -xy)$, we have $\sum_{i=1}^t d_{F_i}(x)= d(x) -1$ and $\sum_{i=1}^t d_{F_i}(y) = d(y) -1$. Consequently, 
\[
\sum_{i=1}^t (2d_{F_i}(x) + d_{F_i}(y) ) = 2d(x) + d(y) -3 \le  (2t+2) -3 < 2t. 
\]
By average, there exists an $ i\in \{1, \dots, t\} $ such that $2d_{F_i}(x) + d_{F_i}(y) \le 1$. Hence, $d_{F_i}(x) =0$ and $d_{F_i}(y) \le 1$, that is,  $x$ is an isolated vertex in $F_i$ and $y$ is either an isolated vertex or an end-vertex of a path in $F_i$. As a result, $F_i^* := F_i + xy $ is still a linear forest, in which $d_{F_i^*}(x) =1$ and $d_{F_i^*}(y) \le 2$. Note that only the degree of $ x $ and $ y $ change, so that $ \FF^*(v,2) = \FF(v,2)$ for any $ v\ne y$. Therefore, $\FF^* := F_1 \mid \dots \mid F_{i-1} \mid F_i^* \mid F_{i+1} \mid \dots \mid F_t$ is the desired linear forest partition for $G$. \qed

For a given vertex ordering $v_1, v_2, \dots, v_n$ of a graph $ G $, let 
$ L(v_i)  = \{v_1 $, $ v_2 $, $ \ldots $, $ v_{i-1}\} $ and $ R(v_i) = \{v_{i+1}, \dots, v_n\} $ for any vertex $v_i$.  For each $v_i\in V(G)$, let $N_L(v_i) = N(v_i)\cap L(v_i)$, $N_R(v_i) = N(v_i)\cap R(v_i)$. A vertex ordering $v_1, v_2, \dots, v_n$ is called a {\it $k$-degenerate vertex ordering} if $|N_L(v_i)| \le k$ for every $v_i\in V(G)$. It is well-known that 
a graph $G$ is $k$-degenerate if and only if it admits a {\it $k$-degenerate vertex ordering}.   

The following variant of Hall's classic marriage theorem will be applied in our proof. (See \cite[ex.~9]{Diestel2017Chap2}.) Given a family of subsets of a finite set $\AAA = \{ A_1, \dots, A_m\}$, there exist $m$ mutually disjoint subsets $A_i^* \subseteq A_i$ with size $ |A_i^*| \ge r $ for each $i\in \{1, \dots, m\}$ if and only if $ \left\lvert\cup_{i\in I}A_i\right\rvert \ge r |I| $ for every index set $I\subseteq \{1, \dots, m\}$. We call this $m$-tuple $(A_1^*, \dots, A_m^*)$ a {\it system of distinct representatives of size $r$} (or an \textit{$r$-SDR}) of family $\AAA$, and say that $A_i^*$ \textit{represents} $A_i$ for each $i\in \{1, \dots, m\}$.

\begin{lemma}\label{Rstar}
	Let $ G $ be a $ k $-degenerate graph and $ v_1 $, $ \dots $, $ v_n $ be a $ k $-degenerate vertex ordering of $ G $.  For every integer $d$ with $k \le d \le \D(G)$, the family of vertex sets $ \RR: =\{N_R(v_i) :  d(v_i) \ge d \}$ has a $ \floor{\frac{d - k}{k}} $-SDR.
\end{lemma}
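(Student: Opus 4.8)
The plan is to verify the expansion (Hall) condition required by the system-of-distinct-representatives criterion stated just before the lemma, and then invoke that criterion directly. Set $r = \floor{\frac{d-k}{k}}$. By the criterion, the family $\RR = \{N_R(v_i) : d(v_i)\ge d\}$ admits an $r$-SDR if and only if for every set $S$ of vertices with $d(v_i)\ge d$, writing $W = \bigcup_{v_i\in S} N_R(v_i)$, we have $|W| \ge r|S|$. Thus the whole task reduces to establishing this single inequality for an arbitrary such $S$.

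First I would bound the number of edges running rightward from $S$ into $W$. Let $E$ be the set of edges $v_iv_j$ with $v_i\in S$ and $v_j\in N_R(v_i)$ (so $i<j$); each such edge lands in $W$ by definition, and each is counted exactly once in $\sum_{v_i\in S}|N_R(v_i)|$, since an edge $v_iv_j$ with $i<j$ appears only among the right-neighbours of its earlier endpoint $v_i$. Because the ordering is $k$-degenerate, every vertex satisfies $|N_L(v_i)|\le k$, hence $|N_R(v_i)| = d(v_i)-|N_L(v_i)| \ge d-k$ for each $v_i\in S$. Summing over $S$ gives the lower bound $|E| \ge (d-k)|S|$.

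Next I would bound $|E|$ from above by counting its edges at their right endpoints. The crucial observation is that for a fixed $w\in W$, every edge of $E$ incident to $w$ has its $S$-endpoint appearing before $w$ in the ordering, i.e.\ it is an edge from $w$ to one of its left-neighbours. Since $|N_L(w)|\le k$, at most $k$ edges of $E$ meet $w$, so $|E| \le k|W|$. Combining the two bounds yields $(d-k)|S| \le |E| \le k|W|$, whence
\[
|W| \;\ge\; \frac{(d-k)|S|}{k} \;\ge\; \floor{\frac{d-k}{k}}|S| \;=\; r|S|.
\]
This is exactly the expansion condition, so the Hall-type criterion furnishes the desired $r$-SDR.

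The argument is short, and the only point requiring care---the main conceptual step rather than a genuine obstacle---is recognizing that the edges of $E$ meeting a vertex $w\in W$ are precisely edges to its left-neighbours, so that $k$-degeneracy caps their number by $k$. Once that is seen, the lower bound $d-k$ on each right-degree and the upper bound $k$ on each in-count fit together to produce the ratio $(d-k)/k$, matching the target $r$ after taking the floor. I would also double-check the boundary case $d=k$, where $r=0$ and the claim holds vacuously, to confirm consistency with the hypothesis $k\le d\le \D(G)$.
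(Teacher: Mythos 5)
Your proof is correct and is essentially the paper's own argument: both verify the Hall-type expansion condition by combining the lower bound $|N_R(v_i)|\ge d-k$ (from $|N_L(v_i)|\le k$) with the observation that each $w$ in the union lies in at most $k$ sets $N_R(v_i)$ because $w\in N_R(v_i)$ forces $v_i\in N_L(w)$. Your explicit double count of the edge set $E$ is just a rephrasing of the paper's multiplicity bound $\left|\bigcup_{v_i\in S} N_R(v_i)\right| \ge \frac{1}{k}\sum_{v_i\in S}|N_R(v_i)|$, so the two proofs coincide.
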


\proof 
Let $V_d(G) = \{ v\in V(G): d(v) \ge d\}$. For each $v_i\in V_{d}(G)$, since $d_{G}(v_i) = |N_L(v_i)| + |N_R(v_i)|$ and $|N_L(v_i)| \le k$, we have $ |N_R(v_i)| \ge d(v_i) - k \ge d - k $. For any vertex $v_j$, if $v_j\in N_{R}(v_i)$, then $v_i\in N_L(v_j)$. Since $|N_{L}(v_j)| \le k$, each vertex $v_j\in V(G)$ can only appear in at most $k$ sets $N_{R}(v_i)$ for all $v_i\in V(G)$. Hence, for any $S\subseteq V_{d}(G)$, we have
$$
\left|\bigcup_{v_i\in S} N_{R}(v_i)\right| \ge \frac{1}{k} \sum_{v_i\in S} |N_R(v_i)| \ge \frac{1}{k} (d-k)|S| \ge \floor{\frac{d - k}{k}}|S|.
$$
By Hall's marriage matching theorem, the family $\RR$ has a $\floor{\frac{d - k}{k}}$-SDR with $R^*(v_i)$ for each $v_i\in V_{d}(G)$ satisfying $R^*(v_i) \subseteq N_R(v_i)$ and $|R^*(v_i)| \ge \floor{\frac{d - k}{k}}$. \qed

\section{Proof of Theorem \ref{thm:main}}\label{proofsection}

A graph $ G $ is $1$-degenerate if and only if it is a forest, in which case a linear forest partition of size $ \halfceil{\D(G)} $ can be easily found. We assume $ k > 1 $ in the remainder of our proof. Since the addition or deletion of isolated vertices does not change the linear arboricity, we further assume that the graph being considered does not have isolated vertices.

For any positive integer $d>1$, we call a graph \textit{$(d,1)$-regular} if all its vertices are of degree either $ d $ or $ 1 $. Note that if graph $G$ is $ k $-degenerate with $ \D:=\D(G) $, there is a $(\D,1)$-regular $ k $-degenerate graph $G^*$ containing $G$ as a subgraph. We can construct $G^*$ from $G$ by adding $ \D - d(v) $ new vertices as neighbors for each $ v \in V(G) $ satisfying $ 1<d(v)<\D $. Clearly, $\la{G}\le \la{G^*} $ since $G \subseteq G^*$. With this, it suffices that Theorem \ref{thm:main} holds for $(\D, 1)$-regular $k$-degenerate graphs. We state the following theorem which gives Theorem \ref{thm:main}.

\begin{customthm}{\ref*{thm:main}*}\label{tech:main}
	Let $G$ be a $( \D,1)$-regular $ k $-degenerate graph. If $ \D \ge 2k^2-k $, then there exists a linear forest partition $ \FF = F_1 \mid F_2 \mid  \cdots  \mid F_t $ of $ G $ with $ t = \Halfceil{\D} $.
\end{customthm}

\proof
Suppose $ \D \geq 2k^2-k $. Let $G$ be a $(\D,1)$-regular $ k $-degenerate graph, and $ V_{\D} = \{v:\, d(v) = \D \} $. Then, $ d(v) = 1 $ if $ v\notin V_{\D} $. Let $v_1, \dots, v_n$ be a $k$-degenerate vertex ordering of $G$. Clearly, we may assume that $G$ is connected. Moreover, $i < j$ whenever $d(v_i) =\D$ and $d(v_j) =1$, that is, degree $ 1 $ vertices are placed after degree $ \Delta $ vertices along the vertex ordering. 

Applying Lemma~\ref{Rstar}, we get an $ r $-SDR of the family $ \{ N_R(v_i) : \, v_i\in V_{\D}\} $ where 
$$
r = \floor{\frac{\D - k}{k}} \ge 2k-2 \ge 2,
$$ 
that is, there are mutually disjoint vertex sets $ R^*(v_i)\subseteq N_R(v_i)$ for all $ v_i\in V_{\D} $ such that $ |R^*(v_i)| =r $ for all $ v_i\in V_{\D} $. Additionally, we assume $ R^*(v_i) = \emptyset $ when $ v_i\notin V_{\D} $ for consistency.

For each $i\in \{1, 2, \dots, n\}$, let $ G_i $ be the spanning subgraph of $ G $ induced by edges incident to at least one vertex in $\{v_1$, $v_2$, $\dots$, $v_i\}$. Clearly, $G_1$ is the union of a star centered at $v_1$ and isolated vertices, and $G_1\subseteq G_2 \subseteq \dots \subseteq G_n = G$. The following technical result implies Theorem~\ref{tech:main}. 
Let $t=\Halfceil{\D}$. 

\begin{prop}\label{cla:Gi}
	For each $i \in \{1,\ldots, n\}$, $ G_i $ has a linear forest partition $ \FF^i = F_1^i \mid F_2^i \mid  \cdots  \mid F_t^i $ satisfying that 
	\begin{equation}\label{condi31}
		\FF^i(v,2)=\emptyset \quad \text{ for all } v\in \cup_{j>i} R^*(v_j). \tag{$ * $}
	\end{equation}
\end{prop}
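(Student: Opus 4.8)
The plan is to prove Proposition~\ref{cla:Gi} by induction on $i$ along the chain $G_1\subseteq G_2\subseteq\cdots\subseteq G_n=G$. Passing from $G_{i-1}$ to $G_i$ only adds the star $E(v_i,N_R(v_i))$, since every edge from $v_i$ to $L(v_i)$ already lies in $G_{i-1}$. If $d(v_i)=1$ then (using that $G$ is connected with $\D>1$, so the unique neighbour of $v_i$ precedes it) $N_R(v_i)=\emptyset$, whence $G_i=G_{i-1}$ and we take $\FF^i=\FF^{i-1}$; condition~\eqref{condi31} for $i$ follows from that for $i-1$ because $\cup_{j>i}R^*(v_j)\subseteq\cup_{j>i-1}R^*(v_j)$. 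The base case $i=1$, where $G_1$ is a star and $\FF^0$ is the empty partition, is the trivial instance of the step below. Thus the whole weight of the argument is the inductive step for a vertex $v_i\in V_\D$, where I must fold the star $E(v_i,N_R(v_i))$ into $\FF^{i-1}$.

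I would first record the two features that drive the step. All already-placed edges at $v_i$ go to $N_L(v_i)$, so $v_i$ has degree $L:=|N_L(v_i)|\le k$ in $\FF^{i-1}$; writing $a_\ell=d_{F_\ell^{i-1}}(v_i)$, forest $F_\ell^{i-1}$ can still absorb $c_\ell:=2-a_\ell$ edges at $v_i$, giving total free capacity $\sum_\ell c_\ell=2t-L$. Since we must add $|N_R(v_i)|=\D-L$ edges, the slack is $\sum_\ell c_\ell-|N_R(v_i)|=2t-\D\in\{0,1\}$, so the placement is extremely tight: $v_i$ must reach degree $2$ in all but at most one forest. On the other hand, adding $v_iw$ raises $d(w)$ by $1$, so $w$ becomes internal only if it already had degree $1$ in that forest. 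I would therefore split $N_R(v_i)$ into three classes: the $r$ own-representatives $R^*(v_i)$, which satisfy $d_{F_\ell^{i-1}}(w)\le1$ for all $\ell$ and are released by the step (so may freely become internal); the future-representatives $w\in\cup_{j>i}R^*(v_j)$, which also satisfy $d_{F_\ell^{i-1}}(w)\le1$ everywhere but must stay non-internal, so their edge must enter a forest where they have degree $0$; and the remaining \emph{free} neighbours, which only need a forest with room. Disjointness of the SDR sets makes these classes well defined, and every future-representative adjacent to $v_i$ lies in $N_R(v_i)$.

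The core is a Hall-type assignment of the star edges to the free capacities. The trick is to set the $r$ own-representatives aside and first place only the $|N_R(v_i)|-r$ constrained (future-representative and free) neighbours. Model this as a bipartite graph between these neighbours and the $2t-L$ available ``ports'' ($c_\ell$ ports on $F_\ell$), joining a neighbour to all ports of every forest in which it has room. Every $w\in N_R(v_i)$ has $d_{G_{i-1}}(w)\le k-1$, so a constrained neighbour lacks room in at most $k-1$ forests; consequently, for any set $S$ of constrained neighbours the forests incompatible with \emph{all} of $S$ number at most $k-1$, whence the ports reachable from $S$ number at least $(2t-L)-2(k-1)$. As $|S|\le\D-L-r$, Hall's condition reduces to $(2t-L)-2(k-1)\ge\D-L-r$, i.e.\ $r\ge(\D-2t)+2(k-1)$; since $\D\le 2t$ this is implied by $r\ge 2k-2$, which is precisely the SDR size supplied by Lemma~\ref{Rstar} under $\D\ge 2k^2-k$. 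This places all constrained neighbours; exactly $2t-\D+r\ge r$ ports then remain, into which the own-representatives (compatible with every forest having a free port) can be matched, completing the star. The resulting $\FF^i$ keeps every future-representative at degree $\le1$, and leaves $v_i$ (not a future-representative) together with the released own-representatives as the only vertices whose internal status may have grown, so condition~\eqref{condi31} holds for $i$.

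The step I expect to be the main obstacle is \emph{acyclicity}: the counting above controls only degrees, whereas adding $v_iw$ to $F_\ell$ must also avoid closing a cycle, which happens exactly when $v_i$ and $w$ are the two ends of a common path of $F_\ell$. In the forests where $v_i$ is isolated (the vast majority, since $v_i$ lies in at most $L\le k$ nontrivial components) the first attached edge is automatically cycle-free, and a second neighbour is safe there unless it is the opposite end of the freshly created length-one path; the delicate cases are the forests with $a_\ell=1$ and the choice of which two neighbours share an empty forest. I would handle these by folding a ``no-cycle'' clause into the compatibility relation and checking it forbids only $O(1)$ further forests per neighbour, or—more cleanly—by always pairing, within an empty forest, a neighbour that is isolated there (which a free or future neighbour is in all but $\le k-1$ forests), so that each new path grows at a genuine leaf. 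Verifying that enough isolated slots survive the tight capacity budget is the crux, and is exactly where the margin afforded by $r\ge 2k-2$ would be spent a second time.
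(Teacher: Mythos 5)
There is a genuine gap, and you name it yourself: acyclicity. Your Hall-type port-matching controls only vertex degrees, so at best it delivers what the paper calls a \emph{feasible} ($v_i$-saturated) coloring, i.e.\ the content of inequality~(\ref{color10ff}) and Claim~\ref{feasi}. But a feasible assignment of the star $E(v_i,N_R(v_i))$ can close monochromatic cycles, and this failure mode is intrinsically a \emph{pair} constraint, not a per-vertex one: a cycle arises when the two neighbours $w,w'$ placed in a common forest are the two endpoints of one pre-existing path (exactly the configuration of Figure~\ref{fig}, case $\xi\in\FF(v_i,0)$), or when a single neighbour is the far endpoint of $v_i$'s path in a forest with $d_{F_\ell}(v_i)=1$. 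A bipartite Hall condition with per-neighbour forbidden forests cannot see which \emph{pairs} share a forest, so ``folding a no-cycle clause into the compatibility relation'' does not reduce to Hall's theorem; and your alternative fix (always pair an isolated-there neighbour into each doubly-loaded forest) is a degree-constrained assignment with pair constraints whose solvability you explicitly do not verify. The paper's Case~3 of Claim~\ref{case12} shows this is not a removable technicality: when $k=2$, $2t=\D$ and $r=2k-2$, the budget is exactly tight ($\FF(v_i,1)=\emptyset$, $|\FF(v_i,0)|=1$), no a priori choice among the representative edges works, and the fix requires \emph{recoloring an edge of $H$ itself} (changing $\phi_0$, i.e.\ the placement of a $W$-edge already made) --- something no one-shot matching over the star edges can do. A proposal that defers ``the crux'' to two unverified sketches is an incomplete proof.

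It is worth noting how the paper sidesteps the interaction you run into. It does not place the whole star at once: the $|W|=\D-d_{G_{i-1}}(v_i)-r$ non-representative edges are added \emph{sequentially} by the brute-force Lemma~\ref{bruteforce} (Claim~\ref{linearpartitionH}), which is automatically cycle-free because the low-degree endpoint is chosen isolated in its forest, and which preserves $\FF(v,2)=\emptyset$ on all current and future representatives since only $v_i$'s internal status grows. Only the $r$ edges to $R^*(v_i)$ are then placed globally, where the count~(\ref{color10ff}) ($r\le 2|C_0(v_i)|+|C_1(v_i)|\le r+1$) plays the role of your capacity slack; the resulting monochromatic cycles are killed \emph{a posteriori} by taking $\phi$ minimizing $|B(\phi)|$ and performing color swaps on the star (Cases~1--3), using that each representative has degree at most $1$ in every forest so a bad cycle confines at most two representatives to the endpoints of one path. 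So your skeleton (induction, three-way split of $N_R(v_i)$, the $r\ge 2k-2$ margin) matches the paper, and your degree counting is sound, but the swap/minimality mechanism that actually proves Claim~\ref{case12} --- the bulk of the paper's argument --- is missing from your proposal and is not obtainable from the matching framework as you set it up.
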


We notice that $ G_{n-1} = G_n = G $, so that $ \FF^{n-1} $ gives the disred partition which completes the proof of Theorem~\ref{tech:main}. We prove Proposition \ref{cla:Gi} by constructing each $ \FF^i $ inductively. The property (\ref{condi31}) will facilitate each step of the construction.

First, for $ G_1 $, by the definition, all its edges are incident to $v_1$ and $d_{G_1}(v_1) = d(v_1) = \D\le 2t$. By pairing edges in the star, we get a linear forest partition $\FF^1 = F_1^1\mid \dots \mid F_t^1$. Since no vertex other than $ v_1 $ can be an internal vertex of a path in any $F_j^1$ for $ j \in \{1,\ldots ,t\}$, it follows that $ \FF^1(v,2)=\emptyset $ for all $ v\ne v_1 $. Hence, (\ref{condi31}) is satisfied.

Suppose that $ i\ge 2 $, and we have constructed a linear forest partition $ \FF^{i-1}=F^{i-1}_1 \mid \cdots \mid F^{i-1}_t $ of $ G_{i-1}$ such that $\FF^{i-1}(v, 2) = \emptyset$ for each $v\in \cup_{j> i-1} R^*(v_j)$. Next, we construct a desired linear forest partition $\FF^i$ based on $\FF^{i-1}$.  

If $ v_i\notin V_{\D} $, then $ d(v_i) = d_{G_i}(v_i) = 1 $. Because $ G $ is connected and $k>1$, no two vertices of degree $ 1 $ in $ G $ can be adjacent. Based on the assumption that every vertex of degree $ 1 $ appears after every vertex of degree $ \D $ in the $ k $-degenerate ordering $ v_1, v_2, \ldots, v_n $ of $G$, the only neighbor $ v_j $ of $ v_i $ must be in $ L(v_i) $, i.e., $ E(v_i,R(v_i))=\emptyset$. Since $ E(G_i)=E(G_{i-1})\cup E(v_i,N_R(v_i))$, it follows that $G_i = G_{i-1}$. Hence, $\FF^i = \FF^{i-1}$ gives a desired partition of $G_{i-1}$. We now assume $ v_i\in V_{\D} $, i.e., $ d(v_i) = \D $.

Let $ W = N_R(v_i)\setminus R^*(v_i)$. We first add edges from $E(v_i, W)$ one by one to $G_{i-1}$. We denote by $ H_\ell $ the graph after adding $\ell$ edges for each $\ell \in \{0,1,2,\ldots, |W|\} $, and by $ H $ the graph $ H_{|W|} $ that we get eventually.
\begin{claim}\label{linearpartitionH}
	For each $ \ell\in \{0,1,2,\ldots, |W|\} $, there exists a linear forest partition $ \FF^{(i-1)\ast \ell} $ of $ H_{\ell} $ such that 
	\begin{equation*}
		\FF^{(i-1)\ast \ell}(v,2)=\emptyset \text{ for all } v\in \cup_{j\ge i} R^*(v_j).
	\end{equation*}
\end{claim}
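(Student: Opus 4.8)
I would prove Claim~\ref{linearpartitionH} by induction on $\ell$, inserting the edges of $E(v_i,W)$ into the partition one at a time via the brute-force Lemma~\ref{bruteforce}. The base case $\ell=0$ is immediate, since $H_0=G_{i-1}$ and I may take $\FF^{(i-1)\ast 0}=\FF^{i-1}$, which by the outer induction hypothesis already satisfies $\FF^{i-1}(v,2)=\emptyset$ for every $v\in\cup_{j\ge i}R^*(v_j)$ (note that $\cup_{j>i-1}$ and $\cup_{j\ge i}$ are the same). For the inductive step I assume a partition $\FF^{(i-1)\ast(\ell-1)}$ of $H_{\ell-1}$ with the required property and add the single new edge $v_iw$, where $w\in W$ is the $\ell$-th processed neighbor, so that $H_\ell=H_{\ell-1}+v_iw$.

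The crucial choice is the orientation in Lemma~\ref{bruteforce}. Because the hypothesis $2d(x)+d(y)\le 2t+2$ penalizes a large $d(x)$ twice as heavily as a large $d(y)$, I would play the low-degree endpoint as $x=w$ and the high-degree endpoint as $y=v_i$, and then verify the inequality with degrees taken in $H_\ell$. Since $w$ lies after $v_i$ in the ordering, $v_i\in N_L(w)$, so $w$ has at most $k-1$ neighbors among $v_1,\dots,v_{i-1}$; together with the new edge this gives $d_{H_\ell}(w)\le k$. For $v_i$, every edge present in $H_\ell$ runs to $N_L(v_i)$ or to one of the at most $|W|=|N_R(v_i)|-r$ processed neighbors, so $d_{H_\ell}(v_i)\le |N_L(v_i)|+|N_R(v_i)|-r=\D-r$. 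Combining these bounds and using the guaranteed SDR size $r\ge 2k-2$, I get $2d_{H_\ell}(w)+d_{H_\ell}(v_i)\le 2k+(\D-r)\le \D+2\le 2t+2$, so Lemma~\ref{bruteforce} applies and yields a linear forest partition $\FF^{(i-1)\ast\ell}$ of $H_\ell$.

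It then remains to check that the invariant survives. Lemma~\ref{bruteforce} guarantees $\FF^{(i-1)\ast\ell}(v,2)=\FF^{(i-1)\ast(\ell-1)}(v,2)$ for every $v\ne y=v_i$. The key observation is that $v_i\notin\cup_{j\ge i}R^*(v_j)$: any member of $R^*(v_j)$ belongs to $N_R(v_j)\subseteq R(v_j)$ and hence lies strictly after $v_j$, so it cannot equal $v_i$ once $j\ge i$. Therefore every $v\in\cup_{j\ge i}R^*(v_j)$ satisfies $v\ne v_i$, and for such $v$ the induction hypothesis gives $\FF^{(i-1)\ast\ell}(v,2)=\FF^{(i-1)\ast(\ell-1)}(v,2)=\emptyset$, which closes the induction.

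The part demanding the most care is the degree inequality, since it is tight: when $\D$ is even and the bounds are simultaneously met one can have $2d_{H_\ell}(w)+d_{H_\ell}(v_i)=\D+2=2t+2$. This tightness is exactly why the representatives $R^*(v_i)$ are reserved and excluded from $W$ (so that $d_{H_\ell}(v_i)$ never exceeds $\D-r$), and why the SDR size $r\ge 2k-2$ — which in turn forces the hypothesis $\D\ge 2k^2-k$ — is required. The orientation that places the high-degree vertex in the coefficient-$1$ slot is equally essential, and is the single point where the asymmetry of Lemma~\ref{bruteforce} is exploited.
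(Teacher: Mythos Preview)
Your proof is correct and follows essentially the same route as the paper: induction on $\ell$, with the new edge $v_iw$ inserted via Lemma~\ref{bruteforce} using $x=w$ and $y=v_i$, the bounds $d_{H_\ell}(w)\le k$ and $d_{H_\ell}(v_i)\le \D-r$, and $r\ge 2k-2$ to verify $2d_{H_\ell}(w)+d_{H_\ell}(v_i)\le 2t+2$. If anything, you are slightly more careful than the paper in spelling out why $v_i\notin\cup_{j\ge i}R^*(v_j)$, which is the point that lets the ``only $y$'s bucket changes'' clause of Lemma~\ref{bruteforce} preserve the invariant.
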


\begin{proof}
	For $ \ell = 0 $, no edge is to be added. We let $ \FF^{(i-1)\ast 0} = \FF^{i-1} $, so that Claim \ref{linearpartitionH} holds obviously. Suppose $ \ell\ge 1 $, and there is a linear forest partition $ \FF^{(i-1)\ast (\ell-1)} $ of $ H_{\ell-1} $ such that $ \FF^{(i-1)\ast (\ell-1)} (v,2)=\emptyset \text{ for all } v\in \cup_{j\ge i} R^*(v_j) $.
	Since $ H_l\subseteq G_i $, we have $ d_{H_{\ell}} (u) \le d_{G_i} (u) \le k $ for any $u \in W$. Using the equality  $ |W| =|N_R(v_i)|-|R^*(v_i)|= \D -d_{G_{i-1}} (v_i)- |R^*(v_i)|$, we have $ d_{H_{\ell}}(v_i) = d_{G_{i-1}} (v_i)+\ell \le d_{G_{i-1}} (v_i)+ |W| = \D - |R^*(v_i)| = \D - r $. Since $r\ge 2k -2$, we have
	\begin{equation*}\label{brutedeg}
		d_{H_\ell}(v_i) +2 d_{H_\ell}(v_j) \le (\D-r) + 2k \le \D + 2 \le 2t+2.
	\end{equation*}
	Applying Lemma~\ref{bruteforce}, we add an edge $v_iu\in E(v_i, W)$ to $\FF^{(i-1)\ast (\ell-1)}$ such $ \FF^{(i-1)\ast \ell} $ is a linear forest partition of $ H_\ell $ satisfying $\FF^{(i-1)\ast \ell} (v, 2) = \FF^{(i-1)\ast (\ell-1)}(v, 2)$ for every $v\ne v_i$. Since $ \FF^{(i-1)\ast (\ell-1)} (v,2)=\emptyset \text{ for all } v\in \cup_{j\ge i} R^*(v_j) $, we have $ \FF^{(i-1)\ast \ell} (v,2)=\emptyset \text{ for all } v\in \cup_{j\ge i} R^*(v_j) $.
\end{proof}


To avoid cumbersome notation, we denote the linear forest $ \FF^{(i-1)\ast |W|} = F_1^{(i-1)\ast |W|} \mid \cdots \mid F_t^{(i-1)\ast |W|} $ of $ H = G_{i-1} + E(v_i, W) $ by $\FF = F_1\mid \dots \mid F_t$. 
In the remainder of the proof, it is convenient to view the linear forest partition $\FF = F_1\mid \dots \mid F_t$ of graph $H$ as a \textit{linear forest coloring} $\phi_0: E(H) \rightarrow \{1, \dots, t\}$ such that $\phi_0 (e) = j$ if and only if $e\in F_j$ for $ j \in \{1,\ldots,t\} $.  By Claim \ref{linearpartitionH}, there exists a linear forest coloring $ \phi_0 $ of $ H $ satisfying $ \FF(v,2) = \emptyset $ for all $ v\in \cup_{j\ge i}R^*(v_j) $. We call such coloring a \textit{desired linear forest coloring}, and denote the set of all desired linear forest colorings of $ H $ by $ \Phi_0(H) $.

Let $ \phi_0\in \Phi_0(H) $ which generates a desired linear forest partition $\FF = F_1\mid \dots \mid F_t$. For each $p\in\{0,1,2\}$, we denote by $ C_p(v) = \{j\mid d_{F_j}(v)=p\} $ which is the set of colors that appear $ p $ times at the edges incident to $ v $. In order to color edges in $E(G_i) \setminus E(H) = E(v_i, R^*(v_i))$, we first assign colors arbitrarily from $C_0(v_i)\cup C_1(v_i)$.  We call a coloring $ \phi $ of edges in $ E(G_i) $ a \textit{$ \phi_0 $-extension} if $ \phi(e) = \phi_0(e) $ for any edge $ e\in E(H) $ and $ \phi(e)\in C_0(v_i)\cup C_1(v_i) $ for any edge $ e\in E(G_i)\setminus E(H) = E(v_i, R^*(v_i)) $. A $ \phi_0 $-extension $ \phi $ is called \textit{$ v_i $-saturated} if for each color $ j\in\{1,\ldots, t\} $, there are at most two edges with color $ j $ incident to $ v_i $. In other words, for edges in $ E(v_i, R^*(v_i)) $, $ \phi $ is $ v_i $-saturated each color in $ C_0(v_i) $ is used at most twice and each color in $ C_1(v_i) $ is used at most once. For a coloring $ \phi_0\in \Phi_0(H) $, since $ C_0(v_i) $, $ C_1(v_i) $ and $ C_2(v_i) $ give a partition of $ \{1,\ldots, t\} $, we have $ |C_2(v_i)|+|C_1(v_i)|+|C_0(v_i)| = t $. And since $ \phi_0 $ is a coloring of edges in $ E(H) $, we have $2|C_2(v_i)|+|C_1(v_i)|=d_{H}(v_i) =d(v_i)-r = \D -r $. Hence, $ 2|C_0(v_i)| + |C_1(v_i)| = 2t - (\D - r) $.  Applying the fact that $ \D\le 2t=2\left\lceil\frac{\D}{2}\right\rceil \le \D + 1$, we have
\begin{equation}\label{color10ff}
	r\le 2|C_0(v_i)| + |C_1(v_i)| \le r+1.
\end{equation}
Combining the lower bound of (\ref{color10ff}) and $ r =|E(v_i, R^*(v_i))| $, we see that there exists a coloring of $ E(v_i, R^*(v_i)) $ such that each color in $ C_0(v_i) $ is used at most twice and each color in $ C_1(v_i) $ is used at most once. This is to say that there exists a  $ v_i $-saturated $ \phi_0 $-extension. Further, we call a coloring $ \phi $ a \textit{feasible} coloring of $ E(G_i) $ if for any $ v\in V(G) $ and $ j\in \{1,\ldots, t\} $, there are at most two edges of color $ j $ incident to $ v $.

\begin{claim}\label{feasi}
	Let $ \phi $ be a $ \phi_0 $-extension for some $ \phi_0\in \Phi_0(H) $. Then, $ \phi $ is $ v_i $-saturated if and only if it is feasible.
\end{claim}
\begin{proof}
	The necessity is clear. We prove the sufficiency. Suppose that $ \phi $ is $ v_i $-saturated. Let $\FF = F_1\mid \dots \mid F_t$ be the desired linear forest partition generated by $ \phi_0 $, and let $ \FF' = F_1'\mid \dots \mid F_t' $ be the corresponding partition of $ E(G_i) $ generated by $ \phi $. We consider any vertex $ v \in V(G) $. It suffices if $ d_{F'_j}(v)\leq 2 $ for all $ j\in \{1,\ldots, t\} $. If $ v\notin \{v_i\}\cup R^*(v_i) $, we have $ d_{F'_j}(v) = d_{F_j}(v) \le 2 $ for all $ j\in \{1,\ldots, t\} $. Also if $ v \in R^*(v_i) $, then $ d_{F_j}(v)\le 1 $ since $ \FF(v,2) = \emptyset $, which gives $ d_{F'_j}(v) \le d_{F_j}(v)+1 \le 2 $ for all $ j\in \{1,\ldots, t\} $. Now suppose that $ v=v_i $. Since $ \phi $ is a $ \phi_0 $-extension which colors the edges in $ E(v_i,R^*(v_i)) $ only using colors in $ C_0(v_i)\cup C_1(v_i) $, we have $ d_{F'_j}(v) = d_{F_j}(v) \le 2 $ for any $ j\in C_2(v_i) $. Moreover, given that $ \phi  $ is $ v_i $-saturated, each color in $ C_0(v_i) $ is used at most twice and each color in $ C_1(v_i) $ is used at most once. Therefore, if $ j\in C_0(v_i) $, then $ d_{F'_j}(v) \le d_{F_j}(v)+2 = 0+2 = 2 $. And if $ j\in C_1(v_i) $, then $ d_{F'_j}(v) \le d_{F_j}(v)+1 = 1+1 = 2 $.
\end{proof}

Note that a feasible coloring of a graph generates a linear forest partition of $ G_i $ if and only if there is no monochromatic cycle.
For any $ v_i $-saturated $ \phi_0 $-extension $ \phi $, we let $B(\phi) = \{ v\in R^*(v_i) \mid v_iv $ is in a cycle of $ F'_j $ for some $ j\in \{1,\ldots,t\} \}$. Clearly, $\phi$ gives a linear forest partition $\FF' = F_1'\mid \dots \mid F_t'$ of $ G_i $ provided $B(\phi) = \emptyset$. Moreover, since $ R^*(v_j)\cap R^*(v_i) = \emptyset $ for all $ j>i $ and $ \phi $ is a $ \phi_0 $-extension which keeps the coloring of every edge $ e\notin E(v_i, R^*(v_i)) $, it follows that $ \FF'(v,2) = \FF(v,2) = \emptyset $ for all $ v\in \cup_{j>i}R^*(v_j) $. Thus additionally, the linear forest partition $ \FF' $ of $ G_i $ satisfies the property (\ref{condi31}), which gives $ \FF^i $ in Proposition \ref{cla:Gi}. The following claim implies that such a linear forest partition $ \FF' $ exists.


\begin{claim}\label{case12}
	There exists a coloring $ \phi_0\in \Phi_0 $ and a $ v_i $-saturated $ \phi_0 $-extension $ \phi $ such that $ B(\phi) = \emptyset $.
\end{claim}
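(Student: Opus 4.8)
The plan is to prove Claim~\ref{case12} by an extremal argument on the quantity $|B(\phi)|$. Over all pairs $(\phi_0,\phi)$ with $\phi_0\in\Phi_0(H)$ and $\phi$ a $v_i$-saturated $\phi_0$-extension (at least one such pair exists, as shown just before Claim~\ref{feasi}), I would pick one minimizing $|B(\phi)|$ and assume for contradiction that $|B(\phi)|\ge 1$. Two structural facts drive everything. First, every $v\in R^*(v_i)$ satisfies $d_H(v)\le k-1$: indeed $v\in N_R(v_i)$ gives $v_i\in N_L(v)$, the edge $v_iv$ is not in $H$, and $|N_L(v)|\le k$. Since $\FF(v,2)=\emptyset$ by the defining property (\ref{condi31}) of $\Phi_0$, the vertex $v$ has degree at most $1$ in every colour class and is a path-endpoint in at most $k-1$ of them; hence $v$ is \emph{isolated} in at least $t-(k-1)$ colour classes, and here $t=\Halfceil{\D}$ is large because $\D\ge 2k^2-k$. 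Second, by Claim~\ref{feasi} the extension is feasible, so a monochromatic cycle can pass only through $v_i$, using exactly the two equal-coloured edges at $v_i$ to reach the two endpoints of one path; in particular, if $v_iv$ lies on a monochromatic cycle then $v$ is a path-endpoint in that colour.

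Next I would describe the repair as a two-colour swap at $v_i$. Pick $v\in B(\phi)$, let $j=\phi(v_iv)$, and let $v_iw$ be the second $j$-edge of the cycle through $v_iv$. I would choose a colour $j^*\ne j$ in which $v$ is isolated, together with a companion $j^*$-edge $v_ia$ at $v_i$ whose far endpoint $a$ is isolated in $F_j$; then recolour $v_iv$ from $j$ to $j^*$ and $v_ia$ from $j^*$ to $j$. The first reroute destroys the offending cycle and, because $v$ becomes a pendant in $F_{j^*}$, cannot create a new cycle through $v_iv$. The second reroute restores $v_i$-saturation, and since $a$ is isolated in $F_j$ it only attaches a pendant edge in colour $j$, so it neither creates a cycle nor raises any vertex to degree two in colour $j$; thus the resulting $H$-colouring stays in $\Phi_0(H)$ even when $v_ia\in E(H)$. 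The new pair $(\phi_0',\phi')$ is therefore admissible with $|B(\phi')|<|B(\phi)|$, contradicting minimality.

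The heart of the proof — and the step I expect to be the main obstacle — is to guarantee that a valid pair $(j^*,a)$ always exists. The tight capacity $r\le 2|C_0(v_i)|+|C_1(v_i)|\le r+1$ from (\ref{color10ff}) means that essentially every colour is full at $v_i$, so no colour is simply ``free'' and every repair must be a genuine swap; the companion edge is then forced, and one must show its far endpoint can be taken isolated in $F_j$. I would prove this by comparing two counts: the number of colours $j^*$ in which $v$ is isolated is at least $t-(k-1)$, which is enormous relative to $k$ since $\D\ge 2k^2-k$, whereas the colours for which \emph{every} admissible companion fails are controlled by path-endpoint incidences and hence bounded in terms of $k$ via the degeneracy bound $d_H(\cdot)\le k-1$ and the single cycle structure at $w$. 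Establishing that this surplus is strictly positive — while simultaneously respecting feasibility, the no-new-cycle condition, and membership in $\Phi_0$ — is the crux, and the hypothesis $\D\ge 2k^2-k$ (equivalently $r=\floor{(\D-k)/k}\ge 2k-2$) is precisely what makes the bookkeeping close.
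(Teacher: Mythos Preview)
Your extremal framework---minimize $|B(\phi)|$ over all admissible pairs and find a two-edge swap at $v_i$ that strictly decreases it---is exactly the skeleton the paper uses. The gap is in the swap condition you impose on the companion edge. Requiring the far endpoint $a$ to be \emph{isolated} in $F_j$ is too strong, and the counting you sketch cannot close: there is no bound, in terms of $k$ alone, on how many neighbours of $v_i$ carry a $j$-coloured edge in $H$. A vertex $a\in W$ has $d_H(a)\le k$, but nothing prevents one of those $\le k$ edges from having colour $j$; across the (up to $\Delta$) neighbours of $v_i$ this could make almost every colour ``fully bad'' in your sense. The phrases ``path-endpoint incidences'' and ``single cycle structure at $w$'' do not yield such a bound, so the surplus $t-(k-1)$ versus (bad colours) comparison does not go through.

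What the paper actually does is drop the isolation requirement on $a$ and instead use feasibility plus the structure of the original monochromatic cycle $C$ to rule out a new $\xi$-cycle through $v_ia$: any such cycle would force a $\xi$-path $P^*\subseteq F'_\xi$ sharing $v_i$ with $C$, hence $P^*\subseteq C$, hence $a\in C\cap R^*(v_i)=\{w,w'\}$, contradicting $\phi(v_ia)\ne\xi$. This much weaker condition on $a$ is automatically satisfied whenever the companion lies in $R^*(v_i)$, but the set $(C_0(v_i)\cup C_1(v_i))\cap C_0(w)$ of colours available for Case~1 can be empty (the two sets have sizes roughly $r/2$ and $t-(k-1)$, and inclusion--exclusion gives nothing). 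The paper therefore needs a genuine case split: Case~2 picks any $\eta\in C_0(v_i)\setminus\{\xi\}$ and chooses the companion $u$ along a potential $\eta$-path to $w$, and Case~3 shows that if both Cases~1 and~2 fail then necessarily $k=2$, $|C_0(v_i)|=1$, $r=2$, after which a swap with an edge into $W$ works. Your single-swap-with-counting plan does not see this trichotomy; to repair it you would need either to weaken the condition on $a$ to ``$a\notin\{w,w'\}$ and $a\in R^*(v_i)$'' (which then forces you into the same case analysis when no such colour exists) or to find a genuinely different bound on the bad colours, which the degeneracy hypothesis alone does not provide.
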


\begin{figure}[h]
	\centering
	\resizebox{0.7\textwidth}{!}{%
		\begin{tikzpicture}
			\begin{scope}
				\filldraw [line width=4mm,join=round,black!10]
				(0,1) rectangle (0.8,3.7) 
				(3.5,0) rectangle (4.7,5.9);
				\node[above] at (0.4,3.9) {$ N_L(v_i) $};
				\node[above] at (4.1,6.1) {$ N_R(v_i) $};
				
				\filldraw [line width=4mm,join=round,black!20]
				(3.7,0.2) rectangle (4.5,2.5)
				(3.7,3.5) rectangle (4.5,5.4);
				
				\node[above,font=\small] at (4.1,2.6) {$ R^*(v_i) $};
				\node[above,font=\small] at (4.1,5.5) {$ W $};
				\node[above,font=\small] (uu) at (4.1,0.5) {$ w $};
				
				\coordinate (v) at (2,2.8);
				\node[above] at (2,2.8) {$ v_i $};
				
				\draw[thick] 
				(v) -- (0.4,3.3)
				(v) -- (0.4,2) node[midway,below]{$ \xi $}
				(v) -- (4.1,5)
				(v) -- (4.1,4)
				(v) -- (4.1,2.2)
				(v) -- (4.1,1.3);
				\draw[thick,dotted] (v) -- (4.1,0.5);
				
				\fill (2,2.8) circle (2pt);
				
				\draw[thick,out=-60,in=-170] (0.4,2) edge node[below]{$ \xi $} (4.1,0.5);
				\node at (2,-0.7) {\parbox{0.3\linewidth}{\subcaption{$ \xi\in\FF(v_i,1) $}\label{subfig:a}}};
			\end{scope}
			
			\begin{scope}[xshift=7cm]
				\filldraw [line width=4mm,join=round,black!10]
				(0,1) rectangle (0.8,3.7) 
				(3.5,0) rectangle (4.7,5.9);
				\node[above] at (0.4,3.9) {$ N_L(v_i) $};
				\node[above] at (4.1,6.1) {$ N_R(v_i) $};
				
				\filldraw [line width=4mm,join=round,black!20]
				(3.7,0.2) rectangle (4.5,2.5)
				(3.7,3.5) rectangle (4.5,5.4);
				
				\node[above,font=\small] at (4.1,2.6) {$ R^*(v_i) $};
				\node[above,font=\small] at (4.1,5.5) {$ W $};
				\node[above,font=\small] (u) at (4.1,2.2) {$ w $};
				\node[above,font=\small] (uu) at (4.1,1.3) {$ w' $};
				
				\coordinate (v) at (2,2.8);
				\node[above] at (2,2.8) {$ v_i $};
				
				\draw[thick] 
				(v) -- (0.4,3.3)
				(v) -- (0.4,2)
				(v) -- (4.1,5)
				(v) -- (4.1,4)
				(v) -- (4.1,0.5);
				\draw[thick,dotted] 
				(v) -- (4.1,2.2)
				(v) -- (4.1,1.3);
				
				\fill (2,2.8) circle (2pt);
				
				\draw[thick,out=-10,in=95] (4.1,2.2) edge (5.1,1.6);
				\draw[thick,out=-95,in=-5] (5.1,1.6) edge (4.1,1.3);
				\node[right] at (5.1,1.6) {$ \xi $};
				\node at (2,-0.7) {\parbox{0.3\linewidth}{\subcaption{$ \xi\in\FF(v_i,0) $}\label{subfig:b}}};
			\end{scope}		
		\end{tikzpicture}
	}
	\caption{Two possible monochromatic cycles in $ \FF' $}
	\label{fig}
\end{figure}
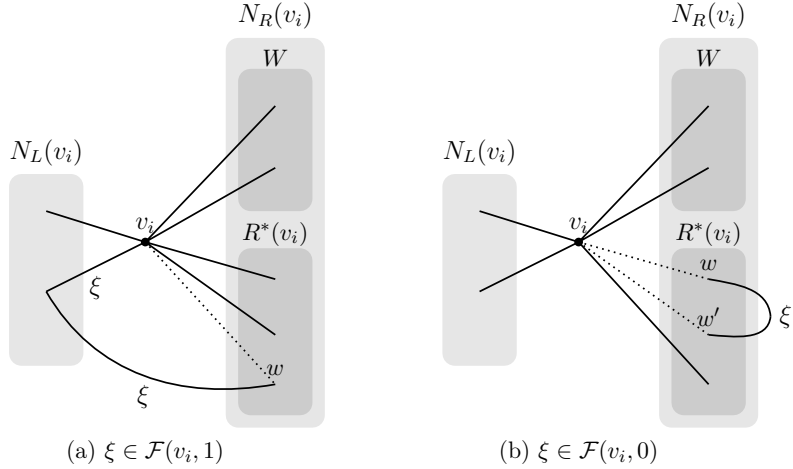

\begin{proof}
Suppose on the contrary that $ B(\phi) \ne \emptyset $ for any $ \phi_0\in \Phi_0 $ and any $ v_i $-saturated $ \phi_0 $-extension $ \phi $. Let $ \phi_0\in \Phi_0 $ and $\phi$ be a $ v_i $-saturated $ \phi_0 $-extension such that $|B(\phi)| $ is the minimum. By Claim \ref{feasi},  $ \phi $ is feasible. Let $ \FF = F_1\mid \dots \mid F_t $ and $ \FF' = F_1'\mid \dots \mid F_t' $ be the corresponding partitions obtained from $ \phi_0 $ and $ \phi $, respectively. Then, there exist $w\in B(\phi)$ and $\xi = \phi(v_i w)$ such that  $F_{\xi}'$ has a cycle $ C $ containing edge $v_iw$. Let $ P = C-v_i $. Clearly, $ P $ is a path in $ F_\xi $. We claim that $|V(P)\cap (R^*(v_i)\setminus \{w\})| \le 1$. 
For a vertex $u\in R^*(v_i)$, we note that $ d_{F_\xi}(u)\le 1$
since $ \FF(u,2) = \emptyset $. Hence, any vertex in $ V(P)\cap R^*(v_i) $ is an end-vertex of $ P $. It follows that $ |V(P)\cap R^*(v_i)|\le 2 $. Since $ w $ is obviously an end-vertex of $ P $, there exists at most one vertex $ w'\in V(P)\cap (R^*(v_i)\setminus \{w\}) $, in other words, $|V(P)\cap (R^*(v_i)\setminus \{w\})| \le 1$. There are only two possible types of cycles in $ F'_{\xi} $ as illustrated in Figure~\ref{fig}: if $ F_{\xi}\in\FF(v_i,1) $, $ P $ contains no vertices in $ R^*(v_i) $ other than $ w $ (see figure~\ref{subfig:a}); and if $ F_{\xi}\in \FF(v_i,0) $, the equality $|V(P)\cap (R^*(v_i)\setminus \{w\})| = 1$ holds. In the latter case, we denote the unique vertex in the intersection by $w'$ (see figure~\ref{subfig:b}). Clearly, $\phi(v_iw') = \phi(v_iw) =\xi$.  

Note that in either case above, we have $ F_{\xi} \in \FF(w,1) $. To complete the proof, we prove that there exists a $ v_i $-saturated $ \phi_0 $-extension $ \phi^* $ obtained from $ \phi $, by finding a color $ \eta $ such that replacing the color $ \xi $ on $ v_i w $ by $ \eta $ does not produce any new monochromatic cycles. We consider three cases as follows. 

{\bf Case 1.} $\FF(w, 0)\setminus \FF(v_i, 2) \ne \emptyset$.

Let $F_{\eta}\in \FF(w, 0)\setminus \FF(v_i, 2)$. If there is no edge $ v_i u\in E(v_i, R^*(v_i)) $ such that $ \phi(v_i u) = \eta $, let $ \phi^* $ be obtained from $ \phi $ by assigning $ v_i w $ the color $ \eta $ to replace $ \xi $. If there exists $ v_i u\in E(v_i, R^*(v_i)) $ such that $ \phi(v_i u) = \eta $, let $\phi^*$ be obtained from $\phi$ by swapping colors on $v_iw$ and $v_iu$, i.e., $\phi^*(v_iw) = \eta$ and $\phi^*(v_iu) = \xi$.  We denote by $\mathcal{F}^{*}=F_{1}^{*}\mid \cdots \mid F_{t}^{*}$ the partition given by the coloring $\phi^{*}$. Clearly, $ \phi^* $ is $ \phi_0 $-saturated since no edge color in $ H $ has been changed. Moreover, $ \phi^* $ is $ v_i $-saturated since neither color $ \xi $ or $ \eta $ has been used more than twice.


Further, we have $F_{\eta}^{*} \in \mathcal{F}^{*}(w, 1)$, which implies that there is no monochromatic cycle containing the edge $v_{i} w$ in $\phi^{*}$. Thus $w \notin B\left(\phi^{*}\right)$. Now if the edge $v_{i} u$ does not exist, then we have $\left|B\left(\phi^{*}\right)\right| \leq|B(\phi)|-1$, which contradicts our choice of $\phi$. So, let us assume that the edge $v_{i} u$ exists. 
Suppose that there is a monochromatic cycle $ C^{*} $ containing the edge $v_{i} u$ in $\phi^{*}$ with color $ \xi $. Let $ P^{*} = C^{*} - v_i u $. Note that $ u\notin\left\{w, w^{\prime}\right\} $ since $\phi\left(v_{i} w\right) = \phi\left(v_{i} w^{\prime}\right)=\xi \neq \eta=\phi\left(v_{i} u\right)$. By our earlier observation that the only vertices in $C$ that belong to $R^{*}\left(v_{i}\right)$ are $w$ and $w^{\prime}$ (if exists), it follows that $ P^* $ is not a subpath of $ C $. Since $ v_i $ belongs to both $ C $ and $ P^* $ in $ F'_{\xi} $, there exists a vertex $ v\in C\cup P^* $ such that $d_{F'_{\xi}}(v) = 3$. However, this contradicts to the fact that $ \phi $ is feasible. Therefore we can conclude that there is no monochromatic cycle containing the edge $v_{i} u$ in $\phi^{*}$. Then we again get $\left|B\left(\phi^{*}\right)\right| \leq|B(\phi)|-1$, which contradicts the choice of $\phi$.


{\bf Case 2.} $\FF(v_i, 0)\setminus \{F_\xi\} \ne \emptyset$.

Let $F_{\eta} \in \mathcal{F}\left(v_{i}, 0\right) \backslash\left\{F_{\xi}\right\}$. We choose a vertex $u \in R^{*}\left(v_{i}\right)$ such that $\phi\left(v_{i} u\right)=\eta$ as follows. If there exists a path from $v_{i}$ to $w$ in $F_{\eta}^{\prime}$, then as $F_{\eta} \in \mathcal{F}\left(v_{i}, 0\right)$, the vertex adjacent to $v_{i}$ on this path belongs to $R^{*}\left(v_{i}\right)$ and we choose that vertex as $u$. If there is no path from $v_{i}$ to $w$ in $F_{\eta}^{\prime}$, then we choose $u$ to be any vertex in $R^{*}\left(v_{i}\right)$ such that $\phi\left(v_{i} u\right)=\eta$. Recall that in (\ref{color10ff}) we have $2|C_0(v_i)| + |C_1(v_i)| \le r+1$. Since $ |R^*(v_i)| = r $, all colors in $ C_0(v_i) $ have been used under the coloring $ \phi $. Such a vertex $ u $ always exists as $\eta \in C_0(v_i)$. We now obtain a new coloring $\phi^{*}$ from $\phi$ by swapping colors on $v_iw$ and $v_iu$, i.e., $\phi^*(v_iw) = \eta$ and $\phi^*(v_iu) = \xi$. We denote by $\mathcal{F}^{*}=F_{1}^{*}\left|F_{2}^{*}\right| \cdots \mid F_{t}^{*}$ the partition given by the coloring $\phi^{*}$. As we have just exchanged the colors on two edges in $E(v_{i},R^*(v_i))$, the coloring $ \phi^* $ remains to be a $ v_i $-saturated $ \phi_0 $-extension.

Suppose that there exists a monochromatic cycle containing the edge $v_{i} w$ in $\phi^{*}$. Then, there is a path $P^{\prime}$ from $v_{i}$ to $w$ in $F_{\eta}^{*}-v_{i} w$. As $\phi^{*}\left(v_{i} u\right) \neq \eta$, we know that the edge $v_{i} u$ does not belong to $P^{\prime}$. Thus, $P^{\prime}$ is also a path in $F_{\eta}^{\prime}$ from $v_{i}$ to $w$. Note there exists at most one such path $ P' $ since $ d_{F'_\eta}(w) = d_{F'_\eta}(v_i) = 1 $ and $ d_{F'_\eta}(v) \le 2 $ for all $ v\in V(G) $. However, by our choice of $u$, if such $ P' $ exists, the edge $v_{i} u$ belongs to $P^{\prime}$, which is a contradiction. We can therefore conclude that there is no monochromatic cycle containing the edge $v_{i} w$ in $\phi^{*}$.

Next, suppose that there is a monochromatic cycle containing the edge $v_{i} u$ in $\phi^{*}$. Then, there is a path $P^{\prime \prime}$ from $v_{i}$ to $u$ in $F_{\xi}^{*}-v_{i} u$. Clearly, this path does not contain $v_{i} w$, and therefore $P^{\prime \prime}$ is also a path from $v_{i}$ to $u$ in $F_{\xi}^{\prime}-v_{i} w$. Then, as before, $P^{\prime \prime}$ is a subpath of $P$. Note that $u \neq w^{\prime}$ since $\phi\left(v_{i} w^{\prime}\right)=\xi \neq \eta=\phi\left(v_{i} u\right) .$ However, this implies that $P$ contains the vertex $u \in R^{*}\left(v_{i}\right)$ that is different from $w^{\prime}$ and $w$, which is a contradiction. Therefore, there is no monochromatic cycle containing the edge $v_{i} u$ in $\phi^{*}$. We then have $\left|B\left(\phi^{*}\right)\right| \leq|B(\phi)|-1$, contradicting the choice of $\phi$.

{\bf Case 3.} $\FF(w, 0)\subseteq \FF(v_i, 2)$ and $\FF(v_i, 0)\setminus \{F_\xi\} = \emptyset$.

We claim $ k=2 $ in this case. Since $\mathcal{F}(w, 0) \subseteq \mathcal{F}\left(v_{i}, 2\right)$, it follows that $\mathcal{F}\left(v_{i}, 0\right) \cup \mathcal{F}\left(v_{i}, 1\right) \subseteq \mathcal{F}(w, 1)$, which gives $\left|\mathcal{F}\left(v_{i}, 0\right)\right|+\left|\mathcal{F}\left(v_{i}, 1\right)\right| \leq$ $|\mathcal{F}(w, 1)| \leq d_{H}(w) \leq k-1$. Then, $2\left|\mathcal{F}\left(v_{i}, 0\right)\right|+2\left|\mathcal{F}\left(v_{i}, 1\right)\right| \leq 2 k-2 \leq r $. By (\ref{color10ff}), we have $2\left|\mathcal{F}\left(v_{i}, 0\right)\right|+\left|\mathcal{F}\left(v_{i}, 1\right)\right| = 2 |C_0(v_i)| + |C_1(v_i)| \geq r$. Combining these two inequalities, we have $\left|\mathcal{F}\left(v_{i}, 1\right)\right|=0$, and $2\left|\mathcal{F}\left(v_{i}, 0\right)\right| = r \geq 2 k-2$, which implies $\left|\mathcal{F}\left(v_{i}, 0\right)\right| \geq k-1$. Since we also have $\left|\mathcal{F}\left(v_{i}, 0\right)\right| \leq$ $|\mathcal{F}(w, 1)| \leq k-1$, we now have $\left|\mathcal{F}\left(v_{i}, 0\right)\right|=|\mathcal{F}(w, 1)|=k-1$ and $r=2 k-2$. As $\FF(v_i, 0)\setminus \{F_\xi\} = \emptyset$, we have $\left|\mathcal{F}\left(v_{i}, 0\right)\right| \leq 1$. Therefore, we get $k-1 \leq 1$, or in other words, $k \leq 2$. Since we have assumed $k \geq 2$, we now have $k=2$.

The fact that $k=2$ further implies $\left|\mathcal{F}\left(v_{i}, 0\right)\right|=|\mathcal{F}(w, 1)|=k-1=1$ and $r = 2k-2=2$. We then have $\mathcal{F}\left(v_{i}, 0\right)=\{\xi\}$, which in turn gives $R^{*}\left(v_{i}\right)=\left\{w, w^{\prime}\right\}$. Note that $ |N_L(v_i)|\le k $. As $\Delta \geq 2 k^{2}-k \geq 6$, we know that $\left|N_{R}\left(v_{i}\right)\right|=d(v_i)-|N_L(v_i)|\ge\Delta-k \geq 4$, which implies that $|W| \geq 2$. Let $u$ be any vertex in $W$ and let $\eta=\phi\left(v_{i} u\right)$. Clearly, $\eta \neq \xi$. Let $\phi^{*}$ be the coloring obtained from $\phi$ by swapping colors on $v_iw$ and $v_iu$, i.e., $\phi^*(v_iw) = \eta$ and $\phi^*(v_iu) = \xi$. We denote by $\mathcal{F}^{*}=F_{1}^{*}\left|F_{2}^{*}\right| \cdots \mid F_{t}^{*}$ the partition given by the coloring $\phi^{*}$. Since we changed the coloring of $ H $, we let $ \phi_0' $ be the new coloring of $ H $ in which $ \phi_0'(v_iu) = \xi $ and $ \phi_0'(v_iv) = \phi_0(v_iv) $ for the neighbors $ v\ne u $.
As we have just exchanged the colors of two edges incident on $v_{i}$, we have $d_{F_{j}^{*}}\left(v_{i}\right) \leq 2$ for each $j \in\{1,2, \ldots, t\} $. Therefore, $ \phi^* $ is $ v_i $-saturated.

Since $d_{G_i}(w) =2$ and $v_iw$ is contained in a cycle in $F_{\xi}'$, the other edge incident to $w$ in $G_i$ is also given with color $\xi$.  Thus, there is no monochromatic cycle containing the edge $v_{i} w$ in $\phi^{*}$ as $\phi^*(v_iw) = \eta$. Suppose that there is a monochromatic cycle containing the edge $v_{i} u$ in $\phi^{*}$. Then as before, there is a path $P^{\prime}$ from $v_{i}$ to $u$ in $F_{\xi}^{*}-v_{i} u$. Note that $v_{i} w$ is not in $P^{\prime}$ as $\phi^{*}\left(v_{i} w\right) \neq \xi$. Then $P^{\prime}$ is  a path from $v_{i}$ to $u$ in $F_{\xi}^{\prime}-v_{i} w$. Since $v_{i}$ belongs to $P^{\prime}$, we can then conclude that $P^{\prime}$ is a subpath of $C$. Since the degree of $u$ in $ G_i $ is at most $ 2 $, and one edge incident on it is colored $\eta$ in $\phi$, we can conclude that $u$ is an end-vertex of $P$, which contradicts the fact that the end-vertices of $P$ are $v_{i}$ and $w$. Therefore, there is no monochromatic cycle containing the edge $v_{i} u$ in $\phi^{*}$. Then we have $\left|B\left(\phi^{*}\right)\right| \leq|B(\phi)|-1$, contradicting our choice of $\phi$. This completes our proof.
\end{proof}

\section{Remark}
In addition, along a similar approach as the proof of Theorem \ref{thm:main}, the original upper bound stated in the LAC holds for a slightly smaller maximum degree than in Theorem \ref{thm:main}. We state it as the following theorem.

\begin{thm}\label{thm:add}
	Let $ G $ be a $k$-degenerate graph. If $ \D(G) \ge 2k^2-2k $, then $ \la{G} \le \Halfceil{\D(G)+1}$. 
\end{thm}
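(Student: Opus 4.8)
The plan is to re-run the inductive construction from the proof of Theorem~\ref{tech:main}, but keeping one extra linear forest in reserve. First I would repeat the reduction verbatim: it suffices to show that every $(\D,1)$-regular $k$-degenerate graph $G$ admits a linear forest partition into $t := \Halfceil{\D+1}$ parts, since padding an arbitrary $k$-degenerate graph with pendant vertices produces such a supergraph without changing $\D$ or increasing the linear arboricity. The only structural change is the value of $t$: now $2t \in \{\D+1,\D+2\}$, so $2t \ge \D+1$ rather than merely $2t \ge \D$. Fixing a $k$-degenerate ordering $v_1,\dots,v_n$ with degree-$1$ vertices last and applying Lemma~\ref{Rstar} with $d=\D$, the hypothesis $\D \ge 2k^2-2k$ yields an $r$-SDR of $\{N_R(v_i): v_i \in V_\D\}$ with $r = \floor{(\D-k)/k} \ge 2k-3$, exactly one less than before. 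I would then build partitions $\FF^i$ of the graphs $G_i$ inductively, as in Proposition~\ref{cla:Gi}, maintaining property~(\ref{condi31}).

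The second step is to check that this single lost unit in $r$ is absorbed by the extra forest in every estimate. In the brute-force stage (the analogue of Claim~\ref{linearpartitionH}) each added edge $v_iu$ with $u \in W$ satisfies $2 d_{H_\ell}(u) + d_{H_\ell}(v_i) \le 2k + (\D - r) \le \D + 3 \le 2t+2$, so Lemma~\ref{bruteforce} still applies and still protects the reserved vertices. After adding $E(v_i,W)$, the colour count of (\ref{color10ff}) shifts to
\[
	r+1 \;\le\; 2|C_0(v_i)| + |C_1(v_i)| \;=\; 2t - \D + r \;\le\; r+2,
\]
whose lower bound still exceeds $r = |E(v_i,R^*(v_i))|$, so a $v_i$-saturated $\phi_0$-extension exists and Claim~\ref{feasi} carries over unchanged.

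The crux, and the step I expect to be most delicate, is the cycle-breaking argument of Claim~\ref{case12}, where the original proof is arithmetically tight. Cases~1 and~2 transfer almost verbatim; the only new point is that the extra unit of slack in (\ref{color10ff}) allows a colour $\eta \in \FF(v_i,0)\setminus\{F_\xi\}$ to appear on no edge of $E(v_i,R^*(v_i))$, so Case~2 acquires a harmless subcase: when no edge $v_iu$ of colour $\eta$ exists one simply recolours $v_iw$ to $\eta$, creating no cycle since $d_{F^*_\eta}(v_i)=1$. The real pay-off of the extra forest appears in Case~3, which becomes vacuous. Indeed, combining $|\FF(v_i,0)|+|\FF(v_i,1)| \le d_H(w) \le k-1$ with $2|C_0(v_i)|+|C_1(v_i)| = 2t-\D+r \ge r+1 \ge 2k-2$ forces $|C_0(v_i)| \ge k-1$; since $\FF(v_i,0)\setminus\{F_\xi\}=\emptyset$ gives $|C_0(v_i)|\le 1$, we get $k=2$, and then the equation $2=2t-\D+r$ with $2t-\D\ge 1$ and $r\ge 1$ forces $2t-\D=1$ and $r=1$. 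Hence $R^*(v_i)=\{w\}$, $C_1(v_i)=\emptyset$, and the unique edge $v_iw$ must receive the single available colour $\xi\in C_0(v_i)$, giving $d_{F'_\xi}(v_i)=1$; thus $v_iw$ cannot lie on a monochromatic cycle, contradicting $w \in B(\phi)$.

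The main obstacle is therefore bookkeeping rather than a new idea: one must verify that the unit lost in $r$ (from $2k-2$ down to $2k-3$) is precisely matched by the single extra forest ($2t \ge \D+1$ in place of $2t\ge\D$) in each of the tight inequalities above, and revisit the case split accordingly, noting that the degenerate Case~3 is now impossible. Once this is confirmed, $\FF^{n-1}$ is the desired partition into $t = \Halfceil{\D+1}$ linear forests, giving $\la{G}\le\Halfceil{\D(G)+1}$.
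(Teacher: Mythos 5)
Your proposal is correct and takes essentially the same route as the paper, whose own proof of Theorem~\ref{thm:add} simply asserts that the construction of Theorem~\ref{tech:main} ``still works'' with $t=\Halfceil{\D+1}$ and $r\ge 2k-3$. In fact you supply the verifications the paper leaves implicit, and they check out: the shifted bounds $r+1\le 2|C_0(v_i)|+|C_1(v_i)|\le r+2$ replacing (\ref{color10ff}), the new harmless subcase of Case~2 (recoloring $v_iw$ with an unused $\eta\in C_0(v_i)$ when no edge $v_iu$ of color $\eta$ exists), and the vacuity of Case~3 (equality forces $k=2$, $r=1$, $C_0(v_i)=\{\xi\}$, hence $d_{F'_\xi}(v_i)=1$, contradicting a monochromatic cycle through $v_iw$) are all sound.
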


Similarly, now we let $ t = \halfceil{\D+1} $, and state the following equivalent theorem in terms of $(\D,1)$-regular $ k $-degenerate graphs. 

\begin{customthm}{\ref*{thm:add}*}\label{tech:add}
	Let $G$ be a $(\D,1)$-regular $ k $-degenerate graph. If $ \D \ge 2k^2-2k $, then there exists a linear forest partition $ \FF = F_1 \mid F_2 \mid  \cdots  \mid F_t $ with $ t = \Halfceil{\D+1} $.
\end{customthm}

\proof
There are some slight differences from the proof of Section \ref{proofsection} in terms of the computation related to $ t $. Suppose $ \D \ge 2k^2 - 2k $. Then the size of each representative of the family $ \{ N_R(v_i) : \, v_i\in V_{\D}\} $ satisfies $ r \ge 2k-3 $. It is readily seen that all conditions related to $ \D, r $ and $ t $ still hold. 

Therefore, the same construction works. We can find all desired linear forest partitions $ \FF^i = F_1^i\mid F_2^i\mid \cdots \mid F_t^i $ for any $G_i$, $i = 1,\ldots, n$ such that $ \FF^i(v,2)=\emptyset $ for all $ v\in \cup_{j>i} R^*(v_j) $.
\qed

We strongly believe the quadratic lower bound $2k^2 -k$ for the maximum degree in Theorem~\ref{thm:main} may be reduced to $2k$ since it is only required in our proof of Lemma~\ref{Rstar}. We wonder if there is a way to reduce it to a linear bound in terms of $k$.

\section{Acknowledgments}

Thanks to Mathew Francis for his detailed review report. His report not only corrected typos and suggested on grammars, but also helped us a lot on making our paper more readable and understandable.

\bibliographystyle{plain}

\bibliography{LinearArboricity.bib}

\end{document}